\newtheorem{theorem}{Theorem}[section]
\newtheorem{lemma}[theorem]{Lemma}
\newtheorem{corollary}[theorem]{Corollary}
\newtheorem{remark}[theorem]{Remark}
\begin{document}
\title{Construction of Structured Incoherent Unit Norm Tight Frames}
\author{{Pradip Sasmal, Phanindra Jampana and C. S. Sastry}%
\thanks{P. Sasmal and C. S. Sastry are with the Department of Mathematics,
  Indian Institute of Technology, Hyderabad, Telangana. 502285, India, email:
  ma12p1005@iith.ac.in, csastry@iith.ac.in}%
\thanks{P. Jampana is with the Department of Chemical Engineering,
  Indian Institute of Technology, Hyderabad, Telangana. 502285, India, email:
  pjampana@iith.ac.in}}
\maketitle
\begin{abstract} 
  The exact recovery property of Basis pursuit (BP) and Orthogonal Matching
  Pursuit (OMP) has a relation with the coherence of the underlying frame. A
  frame with low coherence provides better
  guarantees for exact recovery. In particular, Incoherent Unit Norm Tight
  Frames (IUNTFs) play a significant
  role in sparse representations. 
  IUNTFs with special structure, in particular those given by a
  union of several orthonormal bases, are known to satisfy better
  theoretical guarantees for recovering sparse signals. In the present work,
  we propose to construct structured IUNTFs consisting of large number of orthonormal bases. For a given $r, k, m$ with $k$
  being less than or equal to the smallest prime power factor of $m$
  and $r<k,$ we construct a CS matrix of size $mk \times (mk\times m^{r})$
  with coherence at most $\frac{r}{k},$ which consists of $m^{r}$ number of
  orthonormal bases and with density $\frac{1}{m}$. 
	We also present numerical results of recovery performance of union of orthonormal bases as against their Gaussian counterparts. 

\end{abstract}
\section{Introduction}
Frames are overcomplete spanning systems which are a generalization of
bases~\cite{chris_2003, Gita_2013}. 
A family of vectors $\{\phi_{i}\}^{M}_{i=1}$ in $\mathbb{C}^{m}$ is called a
frame for $\mathbb{R}^{m}$,
if there exist constants $0 < A \leq B < \infty$ such that
\begin{align*}
  A\left\|z\right\|^{2} \leq \sum^{M}_{i=1}\left|\left\langle z,
  \phi_{i}\right\rangle\right|^{2} \leq B\left\|z\right\|^{2}, \forall z \in \mathbb{C}^{m} 
\end{align*}
\noindent where $A,B$ are called the lower and upper frame bounds respectively \cite{chris_2003}.
By taking the frame vectors as columns, a full row rank matrix is obtained.
In the rest of the paper, we do not make any distinction between a frame
and its associated matrix and the use the two terms interchangeably. The characterization of a few frames is given in the
following. 
\begin{itemize}
\item If $A=B$, then $\{ \phi_{i} \}^{M}_{i=1}$ is called an $A-$tight frame or simply a
  tight frame.
\item If there exists a constant $c$ such that $\left\|\phi_{i}\right\|_{2} = c$
  for all $i= 1,2,\ldots, n,$ then $\{ \phi_{i} \}^{M}_{i=1}$ is an equal norm frame.
  If $c=1,$ then it is called a unit norm frame.
\item If a frame is both unit norm and tight, it is called a unit norm tight
  frame (UNTF). 

\end{itemize}

UNTFs are known to have good conditioning and provide stable representation. A UNTF exists only for $A = \frac{M}{m}$. It can be noted that a frame which is a concatenation of orthonormal bases is also a UNTF. The coherence of a frame is defined as the maximum absolute value of inner-product between two distinct normalized frame vectors. A UNTF with small
coherence is termed as an incoherent unit norm tight frame (IUNTF).

Compressed Sensing (CS)
\cite{can_2008,elad_2010} is a relatively
new paradigm in signal processing, which aims at recovering sparse signals
from very few linear measurements. Orthogonal Matching Pursuit
and Basis Pursuit (BP) are two of the most widely used CS algorithms. The
performance of both these algorithms depends on the coherence of the underlying
frame.

In \cite{elad_2002,feuer_2002,tropp_2004, grib_2002}, it is shown that frames
which are
a concatenation of several orthonormal bases provide better theoretical
recovery guarantees when compared to general frames. 
In some applications of image/audio processing \cite{molla_2004, starck_2004},
modeling of data as the superposition of several layers attains importance,
which implies the significance of an over-complete representation in terms of
union of orthonormal bases. Further, the special structure of underlying frames
allows for generating sparse representations through efficient solvers such as
block coordinate relaxation (BCR) \cite{sardy_2000}.

 However, it is very
difficult to construct a frame with small coherence which consists of large
number of orthonormal bases in $\mathbb{C}^m$.
Most of the existing constructions are dictated by some particular family
of numbers
(especially primes or their powers). In \cite{cal_1997, stro_2003}, the
authors have
constructed $m+1$ number of orthonormal bases for $\mathbb{R}^{m}$ with
coherence $1/\sqrt{m},$ where $m$ is a power of two. Some of
the well known structured IUNTFs are mutually unbiased bases (MUBs)
(\cite{klap_2003,san_2004,sey_2014,sey_2011,woc_2004}). Two orthonormal bases
$B$
and $B'$ of an $m-$dimensional complex inner-product space are called mutually
unbiased if and only
if $|\left\langle b, b' \right\rangle|^{2}=\frac{1}{m}$ for all $ b \in B$
and $b' \in B'.$
At most $m+1$ mutually unbiased bases of $\mathbb{C}^{m}$ can exist. If $m$ is
a power of a prime,
extremal sets containing $m+1$ mutually unbiased bases are known to exist
\cite{klap_2003,san_2004}.
However, to the
best of our knowledge there exist no constructions of union of
orthonormal bases with small coherence for more general sizes.

In this paper, we provide constructions for structured IUNTFs, more
specifically, concatenation
of orthonormal bases with small coherence, first for sizes governed by primes
or their powers and
then for composite dimensions using polynomials over
finite fields and recently introduced composition rule for binary matrices~\cite{pra_2016}. 

The paper is
organized in several sections. In Section~\ref{sec:basic}, we briefly review
the basics of compressed sensing. Section~\ref{sec:concortho} 
lists the enhanced recovery properties for frames which are a union of
orthonormal bases. Section~\ref{sec:consprimepower} discusses our
construction for sizes governed by primes or their powers.
In Section~\ref{sec:conscomp}, we describe the construction methodology
for general sizes using a recently proposed composition rule for binary
matrices~\cite{pra_2016}.

\section{Basics of Compressed Sensing}
\label{sec:basic}
\subsection{Compressed Sensing}
Compressed Sensing (CS) aims to recover a sparse signal $x\in \mathbb{R}^{M} $
from a few of its
linear measurements $y\in \mathbb{R}^{m} $. A vector is called sparse if only
a few of its elements
are non-zero. Sparsity is measured using the $\| \cdot \|_{0}$ norm,
$\|x\|_{0}:=|\{j\in\{1,2,\dots,M\}:x_{j}\neq 0\}|$. A signal $x$ is said to be
$s$-sparse if $\|x\|_0 \leq s$. 
The measurement vector $y$
is obtained from
the linear system $y=\Phi x$, where $\Phi$ is an $m \times M \;(m < M)$ matrix.
Sparse
solutions can be obtained by the following minimization problem,
\begin{displaymath}
P_0(\Phi,y):\min_{x} \Vert{x} \Vert_0 \; \mbox{subject to} \quad \Phi x=y.
\end{displaymath} 
However, $P_0(\Phi,y)$ is combinatorial in nature and is known to be
NP-hard~\cite{bourgain_2011}.
A common way to obtain approximate solutions for $P_0$ is by using greedy
methods~\cite{tropp_2004}.
Another approach is to solve a convex relaxation of
$P_{0}(\Phi,y)$ (\cite{can_2008}),
\begin{displaymath}
P_1(\Phi,y):\min_{x} \Vert{x} \Vert_1 \; \mbox{subject to} \quad \Phi x=y.
\end{displaymath}
\noindent The coherence of the matrix $\Phi$ is defined as
$$\mu_\Phi= \max_{1\leq\; i,j \leq\; M,\; i\neq j}
\frac{|\; \phi_i ^T\phi_j|}{\Vert \phi_i\Vert_{2} \Vert
  \phi_j \Vert_2},$$  
which gives bounds on the guaranteed
recovery of sparse signals via
Orthogonal Matching Pursuit (OMP) and Basis Pursuit (BP)~\cite{tropp_2004}. 
   
\noindent
\begin{theorem}\cite{elad_2010}
  An arbitrary $s-$sparse signal $x$ can be uniquely recovered
  as a solution to problems $P_{0}(\Phi,y)$ (using OMP and BP) and
  $P_{1}(\Phi,y)$,
  provided 
  \begin{equation} \label{eq:OMP_bound}
s < \frac{1}{2}\biggl(1+\frac{1}{\mu_\Phi}\biggl).
\end{equation}
\end{theorem}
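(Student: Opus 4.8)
The plan is to decompose the statement into three assertions and derive each from the single numerical hypothesis, which I first rewrite in the more convenient equivalent form $(2s-1)\mu_\Phi < 1$. Indeed, $s < \tfrac{1}{2}(1 + 1/\mu_\Phi)$ rearranges to $2s - 1 < 1/\mu_\Phi$, and this is the inequality that every subsequent step ultimately invokes. The three assertions are: uniqueness of the $s$-sparse solution of $P_0(\Phi,y)$, exact recovery by OMP, and exact recovery by BP (the relaxation $P_1(\Phi,y)$).

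For uniqueness I would first lower bound the spark of $\Phi$, i.e.\ the smallest number of linearly dependent columns. Taking any collection of $p$ columns and forming its Gram matrix $G$, unit normality gives ones on the diagonal and off-diagonal entries of modulus at most $\mu_\Phi$. By Gershgorin's disc theorem every eigenvalue of $G$ lies within distance $(p-1)\mu_\Phi$ of $1$, so $G$ is positive definite whenever $(p-1)\mu_\Phi < 1$, i.e.\ whenever $p < 1 + 1/\mu_\Phi$. Hence any such set of columns is linearly independent and $\operatorname{spark}(\Phi) \ge 1 + 1/\mu_\Phi$. If two distinct $s$-sparse vectors produced the same $y$, their difference would be a nonzero null-space vector with at most $2s$ nonzeros, forcing $\operatorname{spark}(\Phi) \le 2s$; but the hypothesis yields $2s < 1 + 1/\mu_\Phi \le \operatorname{spark}(\Phi)$, a contradiction, so the $s$-sparse solution is unique.

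For OMP and BP I would route both through the Exact Recovery Condition. Writing $\Lambda = \operatorname{supp}(x)$ with $|\Lambda| \le s$ and $\Phi_\Lambda$ for the subdictionary, the condition $\max_{j \notin \Lambda} \|\Phi_\Lambda^{\dagger}\phi_j\|_1 < 1$ can be verified from coherence alone: bounding $\|(\Phi_\Lambda^{*}\Phi_\Lambda)^{-1}\|_2 \le 1/(1-(s-1)\mu_\Phi)$ via Gershgorin applied to $\Phi_\Lambda^{*}\Phi_\Lambda - I$, together with $\|\Phi_\Lambda^{*}\phi_j\|_2 \le \sqrt{s}\,\mu_\Phi$, gives $\|\Phi_\Lambda^{\dagger}\phi_j\|_1 \le \frac{s\mu_\Phi}{1-(s-1)\mu_\Phi}$, which is strictly less than $1$ precisely when $(2s-1)\mu_\Phi < 1$. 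For OMP I would then argue by induction that the residual always lies in $\operatorname{span}\{\phi_i : i\in\Lambda\}$ and that the greedy step picks a correct index, the key inequality being
$$\max_{j \notin \Lambda} |\langle \phi_j, r\rangle| < \max_{i \in \Lambda} |\langle \phi_i, r\rangle|,$$
which the Exact Recovery Condition guarantees; since each step captures a new element of $\Lambda$ and $\Phi_\Lambda$ has full column rank, the residual vanishes within $s$ steps. For BP the same condition supplies a dual certificate that is feasible for the dual of $P_1$ and strictly interior off $\Lambda$, forcing every $\ell_1$-optimal point to be supported on $\Lambda$; injectivity of $\Phi_\Lambda$ from the spark argument then identifies the minimizer as $x$.

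The main obstacle is the quantitative heart of the third paragraph: showing that the coherence hypothesis implies the Exact Recovery Condition, i.e.\ controlling the inverse Gram matrix and its interaction with the $\ell_1$--$\ell_\infty$ duality. The spark computation and the reduction of the hypothesis to $(2s-1)\mu_\Phi < 1$ are routine, but the passage from $\|(\Phi_\Lambda^{*}\Phi_\Lambda)^{-1}\|_2$ to the $\ell_1$ bound on $\Phi_\Lambda^{\dagger}\phi_j$, and the construction of the BP dual certificate, are where the estimates must be handled carefully.
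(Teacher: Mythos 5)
The paper offers no proof of this statement at all: it is quoted verbatim as a known result with the citation \cite{elad_2010}, so there is no in-paper argument to compare against. Your blind proof is correct, and it is in fact the standard proof from that cited literature: the uniqueness half is the Donoho--Elad spark argument (Gershgorin on the Gram matrix of any $p$ columns gives positive definiteness when $(p-1)\mu_\Phi<1$, hence $\operatorname{spark}(\Phi)\ge 1+1/\mu_\Phi>2s$), and the algorithmic half is Tropp's Exact Recovery Condition, verified from coherence via $\bigl\|\Phi_\Lambda^{\dagger}\phi_j\bigr\|_1 \le \frac{s\mu_\Phi}{1-(s-1)\mu_\Phi}<1 \iff (2s-1)\mu_\Phi<1$, which simultaneously drives the greedy-selection induction for OMP and furnishes the Fuchs-type dual certificate $(\Phi_\Lambda^{\dagger})^{*}\operatorname{sgn}(x_\Lambda)$ for BP. All your estimates check out: the reduction of the hypothesis to $(2s-1)\mu_\Phi<1$, the bound $\|\Phi_\Lambda^{*}\phi_j\|_2\le\sqrt{s}\,\mu_\Phi$ combined with $\|(\Phi_\Lambda^{*}\Phi_\Lambda)^{-1}\|_2\le (1-(s-1)\mu_\Phi)^{-1}$ and $\|v\|_1\le\sqrt{s}\|v\|_2$ indeed yield the stated $\ell_1$ bound (note $(2s-1)\mu_\Phi<1$ also guarantees $1-(s-1)\mu_\Phi>0$, so the inversion is legitimate), and the injectivity of $\Phi_\Lambda$ needed to conclude both the OMP termination in at most $s$ steps and the identification of the unique $\ell_1$ minimizer follows from your spark bound. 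The only caveat is presentational: the OMP induction and the BP dual-certificate step are stated in outline (e.g., that the residual's membership in $\operatorname{range}(\Phi_\Lambda)$ lets one write $r=\Phi_\Lambda\Phi_\Lambda^{\dagger}r$ and pass the ERC bound through the $\ell_\infty$ ratio), but these are exactly the steps carried out in the reference the paper cites, so nothing is missing in substance.
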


The density of the frame $\Phi$ is key to minimizing the computational
complexity associated with the matrix-vector multiplication. Here,
by density, one refers to the ratio of number of nonzero entries to the total
number of entries of the matrix. The frames constructed in this paper have
small density, which aids in faster processing.

\section{Recovery Guarantees for Concatenation of Orthonormal Bases}
\label{sec:concortho}
Union of orthonormal bases provides better recovery properties compared to general frames. The enhanced recovery
properties for both OMP and BP are given below.
\begin{theorem}\cite{grib_2002}
  Suppose a frame $\Phi$ is a union of $Q$ orthonormal bases such that its coherence is
  $\mu_{\Phi}$. Let $x$ be a superposition of $s_{i}$ atoms
  from the $i$-th basis, $i = 1, \dots ,Q.$ Without loss of generality,
  assume that $0 < s_{1} \leq  s_{2} \leq \dots \leq s_{Q}$. Then OMP and BP recover the signal $x$ provided
  \begin{equation*}
    \sum^{Q}_{i=2}\frac{\mu_{\Phi}s_{i}}{1+\mu_{\Phi}s_{i}} < \frac{1}{2(1+\mu_{\Phi}s_{1})}.
  \end{equation*}
   
\end{theorem}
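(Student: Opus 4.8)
The plan is to reduce the recovery guarantee for both OMP and BP to Tropp's Exact Recovery Condition (ERC): if $T$ denotes the support of $x$, then both algorithms recover $x$ whenever $\max_{j\notin T}\|\Phi_T^{+}\phi_j\|_1 < 1$, where $\Phi_T$ is the submatrix of columns indexed by $T$ and $\Phi_T^{+}=(\Phi_T^{*}\Phi_T)^{-1}\Phi_T^{*}$ is its pseudoinverse. The entire task then becomes showing that the stated hypothesis forces this inequality for every external atom $\phi_j$.

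First I would exploit the block structure coming from the union of bases. Partition $T=T_1\cup\dots\cup T_Q$ with $|T_i|=s_i$, where $T_i$ collects the chosen atoms from the $i$-th orthonormal basis. Then the Gram matrix $G=\Phi_T^{*}\Phi_T$ has identity diagonal blocks (orthonormality within each basis) and off-diagonal blocks $G_{il}$ whose entries are inner products between atoms of distinct bases, hence bounded in modulus by $\mu_\Phi$. Writing $G=I+H$ with $H$ the block-off-diagonal part, the hypothesis makes $H$ small enough that the atoms of $T$ are linearly independent and $G$ is invertible; I would also record the operator-norm bound $\|G_{il}\|_{1\to 1}\le \mu_\Phi s_i$, since each column of $G_{il}$ has $s_i$ entries of modulus at most $\mu_\Phi$.

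The key observation concerns the correlation vector $v=\Phi_T^{*}\phi_j$ for an external atom $\phi_j$ lying in, say, the $p$-th basis. Because $\phi_j$ is orthogonal to every atom of $T_p$ (same orthonormal basis, distinct index), the block $v_p$ vanishes, while for $i\neq p$ one has $\|v_i\|_1\le \mu_\Phi s_i$. Writing $u=\Phi_T^{+}\phi_j=G^{-1}v$ as the solution of $Gu=v$, I would read off the block equation $u_i=v_i-\sum_{l\neq i}G_{il}u_l$. Setting $a_i=\|u_i\|_1$ and $S=\sum_l a_l=\|u\|_1$, the operator-norm bound yields $a_i(1+\mu_\Phi s_i)\le \|v_i\|_1+\mu_\Phi s_i\,S$. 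Dividing by $1+\mu_\Phi s_i$, summing over $i$, and using $v_p=0$ together with $\|v_i\|_1\le\mu_\Phi s_i$ gives, with $\beta_i:=\frac{\mu_\Phi s_i}{1+\mu_\Phi s_i}$, the scalar inequality $S\le \sum_{i\neq p}\beta_i+S\sum_i\beta_i$.

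Finally I would solve this for $S$ and impose $S<1$. The hypothesis guarantees $\sum_i\beta_i<1$, so $S\le \frac{\sum_{i\neq p}\beta_i}{1-\sum_i\beta_i}$, and the requirement $S<1$ is equivalent to $2\sum_i\beta_i-\beta_p<1$. The main obstacle is the dependence on $p$: the ERC must hold for an external atom in every basis, so one must take the worst case over $p$. Since $\beta_i$ is increasing in $s_i$ and $s_1$ is the smallest, the binding case is $p=1$, giving $2\sum_{i\ge2}\beta_i+\beta_1<1$; rewriting $1-\beta_1=\frac{1}{1+\mu_\Phi s_1}$ then recovers exactly $\sum_{i=2}^{Q}\frac{\mu_\Phi s_i}{1+\mu_\Phi s_i}<\frac{1}{2(1+\mu_\Phi s_1)}$, which completes the argument. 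The delicate points I expect are verifying invertibility of $G$ (full column rank of $\Phi_T$) from the block coherence structure, and justifying that $p=1$ is genuinely the worst case.
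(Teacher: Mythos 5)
The paper gives no proof of this theorem; it is quoted from Gribonval--Nielsen \cite{grib_2002}, so there is no internal argument to compare against, and your proposal should be judged on its own. It is correct, and it is essentially the standard proof from the cited literature: reduce both OMP and BP to Tropp's exact recovery condition \cite{tropp_2004}, use the block Gram structure of the union of bases ($G_{ii}=I$, $\|G_{il}\|_{1\to1}\le\mu_\Phi s_i$, and the vanishing block $v_p=0$ for an external atom in the $p$-th basis), and solve the resulting scalar recursion for $S=\|\Phi_T^{+}\phi_j\|_1$. The two points you flag as delicate both close by the machinery you already set up: invertibility of $G$ follows because $Gu=0$ gives $a_i(1+\mu_\Phi s_i)\le\mu_\Phi s_i S$ blockwise, hence $S\le S\sum_i\beta_i$, while the hypothesis yields $2\sum_i\beta_i<1+\beta_1<2$, so $\sum_i\beta_i<1$ and $u=0$; and $p=1$ is genuinely the worst case since $\beta_1\le\beta_p$ implies $2\sum_i\beta_i-\beta_p\le 2\sum_i\beta_i-\beta_1$ for every $p$, so the ERC condition for all external atoms reduces exactly to the stated inequality.
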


\begin{corollary}\cite{elad_2002}
Suppose that $\Phi$ is a concatenation of two orthonormal bases with coherence $\mu_{\Phi}$, and let $x$ be a
signal consisting of $s_1$ atoms from the first basis and $s_2$ atoms from the second basis, where $s_1 \leq s_2.$
Then the above condition holds whenever
\begin{equation*}
 2\mu^{2}_{\Phi}s_1s_2 + \mu_{\Phi}s_2 < 1.
\end{equation*}
\end{corollary}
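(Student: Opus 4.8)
The plan is to specialize the preceding theorem to the case $Q = 2$ and then reduce the resulting inequality to the stated form by elementary algebra. First I would observe that when $\Phi$ is a concatenation of exactly two orthonormal bases, the sum $\sum^{Q}_{i=2}\frac{\mu_{\Phi}s_{i}}{1+\mu_{\Phi}s_{i}}$ collapses to its single term $\frac{\mu_{\Phi}s_{2}}{1+\mu_{\Phi}s_{2}}$, so the hypothesis of the theorem becomes
\begin{equation*}
  \frac{\mu_{\Phi}s_{2}}{1+\mu_{\Phi}s_{2}} < \frac{1}{2(1+\mu_{\Phi}s_{1})}.
\end{equation*}

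Next I would clear denominators. Since $0 < s_{1} \leq s_{2}$ and $\mu_{\Phi} > 0$, both $1+\mu_{\Phi}s_{1}$ and $1+\mu_{\Phi}s_{2}$ are strictly positive, so multiplying through by $2(1+\mu_{\Phi}s_{1})(1+\mu_{\Phi}s_{2})$ preserves the direction of the inequality. This yields $2\mu_{\Phi}s_{2}(1+\mu_{\Phi}s_{1}) < 1+\mu_{\Phi}s_{2}$, and after cancelling the common $\mu_{\Phi}s_{2}$ appearing on both sides the inequality simplifies to $2\mu^{2}_{\Phi}s_{1}s_{2} + \mu_{\Phi}s_{2} < 1$, which is exactly the claimed condition.

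Since each of these steps is reversible, the stated inequality is in fact equivalent to the theorem's hypothesis in this two-basis setting, not merely sufficient; phrasing it as a sufficient condition (``holds whenever'') is therefore harmless. There is no genuine obstacle here beyond the bookkeeping of signs: the only point that requires care is confirming the positivity of the two denominators before cross-multiplying, which is guaranteed by the assumption $s_{1} > 0$ together with the nonnegativity of the coherence $\mu_{\Phi}$. I would therefore present the argument as a short direct computation rather than as a separate lemma.
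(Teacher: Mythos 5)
Your derivation is correct: the paper states this corollary without proof (citing \cite{elad_2002}), and specializing the preceding theorem to $Q=2$ and clearing the (positive) denominators is exactly the intended route, with your algebra checking out, since $2\mu_{\Phi}s_{2}(1+\mu_{\Phi}s_{1}) < 1+\mu_{\Phi}s_{2}$ reduces to $2\mu^{2}_{\Phi}s_{1}s_{2}+\mu_{\Phi}s_{2}<1$. Your added observation that the reduction is an equivalence (not merely a sufficiency) is also right, since the multiplier $2(1+\mu_{\Phi}s_{1})(1+\mu_{\Phi}s_{2})$ is strictly positive.
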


\begin{theorem}\cite{grib_2002}
If $\Phi$ consists of $Q$ orthonormal bases, then OMP and BP recover any $s-$sparse signal provided
\begin{equation}
\label{eq:3}
s<\bigg[\sqrt{2}-1+\frac{1}{2(Q-1)}\bigg]\mu^{-1}_{\Phi}.
\end{equation}
\end{theorem}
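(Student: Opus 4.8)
The plan is to obtain \eqref{eq:3} as a consequence of the general per-basis recovery condition stated above, by passing to the worst case over all ways the $s$ nonzero coefficients can be distributed among the $Q$ bases. Writing $s = s_1 + \cdots + s_Q$ with $0 < s_1 \le \cdots \le s_Q$, it suffices to prove that whenever $\mu_\Phi s < \sqrt2 - 1 + \frac{1}{2(Q-1)}$, the inequality
$$\sum_{i=2}^{Q} \frac{\mu_\Phi s_i}{1+\mu_\Phi s_i} < \frac{1}{2(1+\mu_\Phi s_1)}$$
holds for \emph{every} admissible distribution $(s_1,\dots,s_Q)$. Equivalently, I would show that the maximum of the left-hand side minus the right-hand side, taken over the relaxed simplex $\{s_i \ge 0 : \sum_i s_i = s\}$, is strictly negative.

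First I would fix $s_1$ together with the residual mass $s - s_1$, and observe that $t \mapsto \frac{t}{1+t}$ is concave and increasing. By Jensen's inequality, $\sum_{i=2}^{Q} \frac{\mu_\Phi s_i}{1+\mu_\Phi s_i}$ is then maximal, subject to $\sum_{i=2}^{Q} s_i = s - s_1$, exactly when $s_2 = \cdots = s_Q = (s-s_1)/(Q-1)$. This collapses the problem to a single free parameter. Introducing $p = 1 + \mu_\Phi s_1$ and $q = 1 + \mu_\Phi (s-s_1)/(Q-1)$, the total-sparsity constraint becomes the linear relation $p + (Q-1)q = \mu_\Phi s + Q$, and the quantity to be maximized reads $g = (Q-1)(1 - q^{-1}) - \tfrac12 p^{-1}$.

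Next I would maximize $g$ under this constraint. Eliminating $p$ and differentiating gives the stationarity condition $q^{-2} = \tfrac12 p^{-2}$, i.e. $q = \sqrt2\,p$; a sign check of the derivative (positive for small $q$, negative for large $q$) confirms this point is the maximum. Substituting $q = \sqrt2\,p$ into the constraint yields $p = (\mu_\Phi s + Q)/(1 + \sqrt2(Q-1))$, and after simplification the maximal value obeys $g < 0$ if and only if $p < \frac{1}{\sqrt2} + \frac{1}{2(Q-1)}$, which rearranges precisely to $\mu_\Phi s < \sqrt2 - 1 + \frac{1}{2(Q-1)}$. The $\sqrt2$ in the stated bound is produced by this stationarity relation, and the cancellation $\frac{1}{\sqrt2} + \frac{\sqrt2}{2} = \sqrt2$ is what makes the constant come out cleanly.

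The step I expect to be most delicate is legitimizing the reduction to the stationary point. For $Q \ge 3$ the optimizer $q = \sqrt2\,p$ forces $s_1 < 0$, so it falls outside the feasible range $s_1 \in [0, s/Q]$. I would resolve this by noting that the maximum over the enlarged (unconstrained-in-$s_1$) domain dominates the maximum over the admissible distributions, so negativity of the unconstrained maximal value of $g$ already guarantees the per-basis condition on the feasible set, which is all a sufficient recovery bound requires. I would also record that at the stationary point $q = \sqrt2\,p > p$ gives the common value $(s-s_1)/(Q-1) > s_1$, so the ordering hypothesis $s_1 \le \cdots \le s_Q$ is respected and imposes no extra restriction. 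Combining these observations with the recovery guarantee of the general theorem above completes the argument.
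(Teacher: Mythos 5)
The paper gives no proof of this statement---it is quoted verbatim from Gribonval and Nielsen \cite{grib_2002}---so there is no internal argument to compare against; your derivation from the preceding per-distribution theorem is the natural one and, in substance, the one in the cited source. Your proof is correct: the Jensen reduction to the equal distribution $s_2=\cdots=s_Q=(s-s_1)/(Q-1)$ (valid since a sum of concave functions under a sum constraint is maximized at equal arguments), the substitution $p=1+\mu_\Phi s_1$, $q=1+\mu_\Phi(s-s_1)/(Q-1)$ with unique stationary point $q=\sqrt{2}\,p$ and the sign check making it the global maximum on the relevant interval, the algebra showing negativity of the maximum is equivalent to $\mu_\Phi s<\sqrt{2}-1+\frac{1}{2(Q-1)}$, and, most importantly, your handling of the one delicate point---that the stationary point forces $s_1<0$ for $Q\geq 3$---by noting that the maximum over the relaxed domain dominates the maximum over the feasible distributions, all check out.
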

For small values of $Q$ the bound in~(\ref{eq:3}) is
less restrictive than the general bound given in (\ref{eq:OMP_bound}).

\section{Construction method for prime and prime power sizes}
\label{sec:consprimepower}
In this section, we provide our construction method for a union of orthonormal
bases for the case
when $m$ is a prime or a prime power. Consider the finite field
$\mathbb{F}_{p}=\{f_{1},f_{2},\dots,f_{p}\}$
where $p$ is a prime or a prime power.
Let $S^{p}$ be the collection of polynomials of degree at most
$r$ (where $r < p-1$), which do not contain the constant term. It is easy to
check that the cardinality
of $S^{p}$ is $|S^{p}|=p^{r}.$  For $P\in S^{p},$ define the set
$S^{p}_{P}=\{P_{j}=P+f_{j}:j=1, \dots, p\}$. Fix any ordered $k-$tuple
$z\in \mathbb{F}_p^k$ with $r<k\leq p$. For simplicity, we consider
$z=(f_{1}, \dots, f_{k})$.
An ordered $k-$tuple is formed
after evaluating $P_{j}$ at each of the points of $z$ i.e,
$d^{P}_{j} := \big(P_{j}(f_{1}), \cdots, P_{j}(f_{k})\big)$.
From the $k-$tuple
$d^{P}_{j}$ we form a binary
vector $v_j^{P}$ of length $pk$ using
$$
v_j^{P}(p(m-1)+n) =
\begin{cases}
  1, \text{~if~} P_j(f_m) = f_n\\
  0, \text{~otherwise}
\end{cases}
$$
where $1 \leq m \leq k, 1 \leq n \leq p$. Form a binary matrix $V^{P}$ of
size $pk \times p$ by taking $v_j^{P}$, as columns for $j=1,\dots,p.$ 

It can be verified that the matrix $V^{P}$ satisfies the following properties.
\begin{enumerate}
\item $V^{P}$ has $k$ number of row-blocks with each row-block being
  of size $p$. Each column $v^{P}_{j}$ of $V^{P}$ has exactly $k$ number of ones
  and
  contains a single $1$-valued entry in each block. Also, due to the
  construction, it is easy to see that every row of
  $V^{P}$ contains a single
  $1$-valued entry. Therefore, each row-block is a column (or row)
  permutation of an identity matrix.
\item The density of $V^{P}$ is $\frac{1}{p}.$
\item For $i\neq j,$ there are no common points between any two
  distinct $k-$tuples $d^{P}_{i}$ and $d^{P}_{j}$. This is true because
  $P+f_{i}$ and $P+f_{j}$ have no common root.  As a result there is no
  overlap (i.e., no two columns contain $1$ at the same position)
  between any two distinct columns of $V^{P}$.
\end{enumerate}

We now discuss the construction procedure to produce a unitary matrix
from $V^{P}$. Let $U_{P}$ be a $k\times k$ unitary matrix. A new matrix
$\Phi^P$ is obtained by replacing, in each column of  $V_P$, every 1-valued
entry with a distinct row of $U_P$. The $0$-valued entries are
replaced by a row of zeros. It is clear that the size of the matrix
$\Phi^P$ is $pk \times pk$. The orthonormality of the rows of $\Phi^P$
follows from the fact that $U_P$ is unitary.

A new matrix $\Phi$ is constructed by concatenating $\Phi^P$
for $P\in S^{p}$. The size of $\Phi$ is $pk \times (pk\times p^{r})$.
Let $\alpha =\max_{P,i,j} |u_{P}(i,j)|,$ where $u_{k,P}(i,j)$ denotes the
$(i,j)-$th entry of $U_{P}$. The following theorem bounds the coherence of
$\Phi$
\begin{theorem}
  The coherence $\mu_{\Phi}$ of $\Phi$ is at most $\min(r\alpha^{2},1)$
\end{theorem}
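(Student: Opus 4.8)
The plan is to estimate directly the maximal modulus of the inner product between two distinct normalized columns of $\Phi$. Since each $\Phi^P$ has orthonormal columns (as shown above, $\Phi^P$ is unitary), every column of $\Phi$ is a unit vector, so $\mu_\Phi$ is simply the largest $|\langle \phi, \phi' \rangle|$ over distinct columns $\phi, \phi'$, and the Cauchy--Schwarz inequality already gives $|\langle \phi,\phi'\rangle| \le \|\phi\|_2\|\phi'\|_2 = 1$. This accounts for the $1$ in $\min(r\alpha^2,1)$, so it remains to establish the bound $r\alpha^2$. Two columns drawn from the \emph{same} block $\Phi^P$ are orthonormal and contribute $0$, hence I only need to bound inner products between a column of $\Phi^P$ and a column of $\Phi^{P'}$ with $P \ne P'$.

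First I would pin down the support of a single column. Group the $pk$ columns of $\Phi^P$ into $p$ groups of $k$, the $j$-th group arising from $v^P_j$; within the $j$-th group the nonzero rows are exactly $p(m-1)+n^{P}_{j,m}$ for $m=1,\dots,k$, where $f_{n^P_{j,m}} = P(f_m)+f_j$, and on these rows the group reproduces the rows of $U_P$. Thus the column indexed by $(P,j,t)$ has entry $u_P(m,t)$ in row $p(m-1)+n^P_{j,m}$ and zeros elsewhere. The inner product of columns $(P,j,t)$ and $(P',j',t')$ is therefore a sum over \emph{common} support positions. Because rows $p(m-1)+1,\dots,pm$ form the $m$-th row-block, two support positions can coincide only when they lie in the same block $m$ and in the same slot within it; this happens precisely when
\begin{equation*}
P(f_m)+f_j = P'(f_m)+f_{j'}, \qquad \text{i.e.} \qquad (P-P')(f_m)=f_{j'}-f_j .
\end{equation*}

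The crux is then a root-counting argument. Since $P,P'\in S^p$ have degree at most $r$ and no constant term, and since $P\ne P'$, the difference $P-P'$ is a nonzero polynomial of degree between $1$ and $r$; subtracting the constant $f_{j'}-f_j$ leaves a nonzero polynomial of the same degree, which has at most $r$ roots in $\mathbb{F}_p$. Consequently at most $r$ of the evaluation points $f_1,\dots,f_k$ can satisfy the displayed equation, so the two columns share at most $r$ common support positions. At each such position the contribution to the inner product is $\overline{u_P(m,t)}\,u_{P'}(m,t')$, whose modulus is at most $\alpha^2$ by the definition of $\alpha$. Summing the at most $r$ nonzero terms yields $|\langle \phi,\phi'\rangle|\le r\alpha^2$, and combining with the Cauchy--Schwarz bound gives $\mu_\Phi \le \min(r\alpha^2,1)$. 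The only point requiring care is verifying that $P-P'$ stays nonzero after subtracting the constant offset $f_{j'}-f_j$; this is guaranteed by the absence of a constant term in the polynomials of $S^p$, which forces $\deg(P-P')\ge 1$.
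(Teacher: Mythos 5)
Your proof is correct and follows essentially the same route as the paper's (one-line) proof: the paper also argues that two distinct shifted polynomials differ by a nonzero polynomial of degree at most $r$, hence the columns share at most $r$ support positions, each contributing at most $\alpha^{2}$ by the definition of $\alpha$. Your write-up merely makes explicit the support bookkeeping, the degree argument (nonzero difference since polynomials in $S^{p}$ lack constant terms), and the trivial Cauchy--Schwarz bound giving the $1$ in $\min(r\alpha^{2},1)$.
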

\textit{Proof}: The proof follows from the definition of $\alpha$ and from
the fact that any two distinct polynomials $P^{1}$ and $P^{2}$ belonging to
$S^{p}$ can have at most $r$ number of common roots.

\begin{theorem}
  For $r<k\leq p,$ where $p$ is a prime or a prime power, if there is a $k\times k$
  unitary matrix such that the largest of the absolute values of its entries ($\alpha$)
  satisfies $r\alpha^{2}<1$, then there exists a CS matrix which is
  a union of $p^{r}$ number of orthonormal bases, with coherence being at most
  $r\alpha^{2}$ and with density $\frac{1}{p}$.  
\end{theorem}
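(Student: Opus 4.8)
The plan is to assemble the statement directly from the explicit construction of $\Phi$ together with the coherence bound of the preceding theorem, verifying in turn each of the three asserted properties: that $\Phi$ is a union of $p^r$ orthonormal bases, that its density is $\tfrac{1}{p}$, and that its coherence is at most $r\alpha^2$. Since every ingredient has already been set up, this is essentially a matter of reading off consequences rather than proving anything new.

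First I would confirm that, for each fixed $P\in S^{p}$, the block $\Phi^{P}$ is itself an orthonormal basis of $\mathbb{C}^{pk}$. Recall $V^{P}$ is $pk\times p$, with every row carrying a single $1$ and every column carrying exactly $k$ ones, one per row-block. Replacing each entry of a column of $V^{P}$ by a length-$k$ row expands that column into a $pk\times k$ block of $\Phi^{P}$, whose nonzero rows are the $k$ distinct rows of $U_{P}$ and whose remaining rows vanish; hence $\Phi^{P}$ is square of size $pk\times pk$. Rows of $\Phi^{P}$ sharing a support are distinct rows of the unitary $U_{P}$, and so orthonormal, while rows with disjoint support are trivially orthogonal; therefore $\Phi^{P}$ has orthonormal rows and, being square, is unitary, so its columns form an orthonormal basis. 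Because $|S^{p}|=p^{r}$, concatenating $\Phi^{P}$ over $P\in S^{p}$ exhibits $\Phi$ as a union of exactly $p^{r}$ orthonormal bases.

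Next I would read off the density and the coherence. Each column of $\Phi^{P}$ inherits the support of a column of $V^{P}$, carrying $k$ nonzero entries out of $pk$, so its density is $\tfrac{k}{pk}=\tfrac{1}{p}$; concatenation leaves this unchanged, giving $\Phi$ density $\tfrac{1}{p}$. For the coherence I would invoke the preceding theorem, which gives $\mu_{\Phi}\leq\min(r\alpha^{2},1)$; under the hypothesis $r\alpha^{2}<1$ this minimum equals $r\alpha^{2}$, so $\mu_{\Phi}\leq r\alpha^{2}$. Finally, once $r\geq 1$ the column count $pk\cdot p^{r}$ strictly exceeds the row count $pk$, so $\Phi$ is genuinely overcomplete, and the coherence bound certifies it as a usable CS matrix of the stated size.

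There is no deep obstacle here, as the argument is a bookkeeping synthesis; the one point demanding care is the unitarity of each $\Phi^{P}$. Specifically, I must check that the $k$ ones in any column of $V^{P}$ are replaced by the $k$ \emph{distinct} rows of $U_{P}$, so that within each column-group the nonzero $k\times k$ sub-block is precisely $U_{P}$ (up to a row permutation, which is immaterial). This is exactly what forces the rows of $\Phi^{P}$ to be orthonormal and guarantees that each block is an honest orthonormal basis rather than merely a tight frame, which is the only place where the structural hypotheses on $V^{P}$ and $U_{P}$ are genuinely used.
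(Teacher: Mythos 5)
Your proposal is correct and takes essentially the same route as the paper, which states this theorem without a separate proof precisely because it is the direct assembly you describe: unitarity of each $\Phi^{P}$ from the structure of $V^{P}$ and $U_{P}$, the count $|S^{p}|=p^{r}$, density $\frac{1}{p}$, and the preceding coherence theorem $\mu_{\Phi}\leq\min(r\alpha^{2},1)$ combined with the hypothesis $r\alpha^{2}<1$. Your explicit verification that each $\Phi^{P}$ has orthonormal rows (distinct rows of $U_{P}$ on shared supports, disjoint supports otherwise) simply fills in a detail the paper leaves implicit.
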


\begin{remark}
  One can take DCT (Discrete Cosine Transform) and DFT (Discrete Fourier
  Transform) matrices in the real and complex cases respectively. The DCT matrix
  is defined for $0 \leq i \leq k-1, 0 \leq j \leq k-1$ as
  $$
  U(i,j) =
  \begin{cases}
    \sqrt{1/k}\cos((\pi/k)(j + 0.5)i) & \text{for~} i = 0 \\
    \sqrt{2/k}\cos((\pi/k)(j + 0.5)i) & \text{otherwise}
  \end{cases}
  $$
  For the DCT matrix $\alpha \leq \sqrt{2/k}$ and therefore the coherence
  is at most $2r/k$. In the complex case, it can be seen that
  the coherence is at most $r/k$, when the DFT matrix is used.

\end{remark}

\subsection{A special case}
In this section we discuss a special case with  $r=1, k=p.$ Observe that
for $r=1,$  any two distinct polynomials, $P^{(1)}$ and $P^{(2)}$ belonging
to $S^{p}$ have exactly one common root (i.e., $0$). Therefore,
the intersection between, $v^{P^{(1)}}_{j}$ ($j-$th column of  $V^{P^{(1)}}$)
and $v^{P^{(2)}}_{i}$ ($i-$th column of $V^{P^{(2)}}$) is exactly one.  

Let $H_{p\times p}$ be an orthogonal matrix whose entries are uni-modular
(i. e., $|h(i,j)|= 1$). In the real case, one can take $H$ as the Hadamard
matrix of order $p$. For $p=2^{i}: i\geq 2$ Hadamard matrices of order
$p$ are known to exist. In the complex case, $H$ can be chosen as the
discrete Fourier transform matrix (DFT). In the construction process,
we replace the unitary matrix $U_{P}$ with $\frac{1}{\sqrt{p}}H$. Then the following hold,
\begin{enumerate}
\item The inner-product between two columns of $\Phi$
  corresponding to the same polynomial is zero.
\item The absolute value of the inner product between two columns of
  $\Phi$ corresponding to two different polynomials is $\frac{1}{p}$.
\end{enumerate}
As a result $\Phi$ becomes a union of $p$ mutually unbiased bases with
coherence $\mu_{\Phi} = \frac{1}{p}$.
\section{Construction for the composite case}
\label{sec:conscomp}
For the composite case, we use the following composition
rule given in~\cite{pra_2016} for combining binary matrices. The following
result has been proved there.
\begin{lemma}[Lemma 4 in \cite{pra_2016}]
  \label{lem:comprule}
 For $i=1,2$, let $\Psi_{i}$ be a binary (containing $0,1$) matrix of size
  $m_{i} \times
  M_{i}$ consisting of $k_{i}$ number of row blocks each
  having a size $n_{i}$ so that the intersection between any two
  columns is at most $r_{i}$ and assume that
  $r = \max\{r_{1},r_{2}\} < k \leq \min\{k_{1},k_{2}\}
  \leq \min\{n_{1},n_{2}\}$. Then, the composition rule, denoted by $\ast,$
      produces a matrix $\Psi=\Psi_{1}\ast \Psi_{2}$ of size
      $n_{1}n_{2}k \times M_{1}M_{2}$ containing
      $k$ number of row blocks each having size $n_{1}n_{2}$
      with the intersection between any two columns being at most $r$ and density of $\Psi$ being $\frac{1}{n_{1}n_{2}}$.\quad
		
\end{lemma}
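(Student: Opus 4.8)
The plan is to first make the composition rule $\ast$ explicit, since the size, block, and density claims then reduce to bookkeeping, while the intersection bound is the only substantive point. Because $k\leq\min\{k_{1},k_{2}\}$, each $\Psi_{i}$ has at least $k$ row blocks, and I will only use the first $k$ of them. Writing a column $c$ of $\Psi_{i}$ as the stack of its block restrictions $c^{(1)},\dots,c^{(k_{i})}$, where each $c^{(\ell)}$ is a length-$n_{i}$ binary vector with a single $1$ (the one-$1$-per-block structure produced by the $V^{P}$ construction above), I would define $\ast$ blockwise by tensoring: for each pair $(a,b)$ consisting of a column $a$ of $\Psi_{1}$ and a column $b$ of $\Psi_{2}$, the associated column of $\Psi$ has $\ell$-th block equal to $a^{(\ell)}\otimes b^{(\ell)}$, a length-$n_{1}n_{2}$ binary vector carrying a single $1$, for $1\leq\ell\leq k$.

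First I would verify the structural claims by counting. Each composed column is a stack of $k$ blocks of length $n_{1}n_{2}$, so $\Psi$ has $k n_{1}n_{2}$ rows arranged into $k$ blocks of size $n_{1}n_{2}$; ranging over all pairs $(a,b)$ produces $M_{1}M_{2}$ columns, giving the asserted size $n_{1}n_{2}k\times M_{1}M_{2}$. Since each $a^{(\ell)}$ and each $b^{(\ell)}$ has exactly one nonzero entry, so does $a^{(\ell)}\otimes b^{(\ell)}$; hence every column of $\Psi$ carries exactly $k$ ones among its $kn_{1}n_{2}$ entries, and the density is $\frac{k}{kn_{1}n_{2}}=\frac{1}{n_{1}n_{2}}$.

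The crux is the intersection bound. Take two distinct columns of $\Psi$, indexed by $(a_{1},a_{2})$ and $(b_{1},b_{2})$, where $a_{1},b_{1}$ are columns of $\Psi_{1}$ and $a_{2},b_{2}$ of $\Psi_{2}$. In block $\ell$ the single $1$ of $a_{1}^{(\ell)}\otimes a_{2}^{(\ell)}$ coincides with that of $b_{1}^{(\ell)}\otimes b_{2}^{(\ell)}$ exactly when $a_{1}^{(\ell)}=b_{1}^{(\ell)}$ and $a_{2}^{(\ell)}=b_{2}^{(\ell)}$. Thus the intersection of the two composed columns equals $|A\cap B|$, where $A$ (resp. $B$) is the set of blocks $\ell\in\{1,\dots,k\}$ on which $a_{1},b_{1}$ (resp. $a_{2},b_{2}$) agree. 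The key observation is that restricting attention to the first $k$ blocks can only decrease the number of agreements, so the hypothesis on $\Psi_{1}$ gives $|A|\leq r_{1}$ as soon as $a_{1}\neq b_{1}$, and likewise $|B|\leq r_{2}$ as soon as $a_{2}\neq b_{2}$. Distinctness of the composed columns forces $a_{1}\neq b_{1}$ or $a_{2}\neq b_{2}$, and a short case split then yields $|A\cap B|\leq\min(|A|,|B|)\leq\max(r_{1},r_{2})=r$: for instance, if $a_{1}=b_{1}$ then $A=\{1,\dots,k\}$, but then necessarily $a_{2}\neq b_{2}$, whence $|A\cap B|=|B|\leq r_{2}\leq r$.

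The step I expect to demand the most care is precisely this last one: in the degenerate cases where one of the two underlying column pairs coincides, the corresponding agreement set is the full index set $\{1,\dots,k\}$ rather than being bounded by $r_{i}$, so the bound survives only because the other pair is then guaranteed to be distinct. Getting the case analysis exhaustive and correct there is the one place where a naive argument could slip.
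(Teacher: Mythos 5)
Your argument is correct, and note that the paper offers no proof of this lemma at all---it is imported verbatim as Lemma 4 of \cite{pra_2016}---so the relevant comparison is with the proof in that reference, which your blockwise reconstruction reproduces faithfully: the composition rule there likewise pairs the $1$-positions of a column of $\Psi_1$ and a column of $\Psi_2$ block by block over the first $k$ blocks (equivalently, your $a^{(\ell)}\otimes b^{(\ell)}$), and the intersection bound rests on the same observation that a coincidence in the composed columns forces simultaneous coincidences in both factors. Your explicit case split for the degenerate cases $a_1=b_1$ or $a_2=b_2$ (where one agreement set is all of $\{1,\dots,k\}$ but the other pair is then forced to be distinct, giving $|A\cap B|\leq \max\{r_1,r_2\}=r<k$, which also shows distinct index pairs yield distinct composed columns), together with your explicit flagging of the one-$1$-per-block assumption on the $\Psi_i$---needed for the density claim $\frac{1}{n_1 n_2}$ but left implicit in the lemma as quoted here---is sound and, if anything, more careful than the source.
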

Let $p$ and $q$ be two distinct primes or prime powers.
With $r<k \leq \min\{p,q\},$ $P \in S^{p}$ and
$Q \in S^{q},$ we apply the composition rule on the
matrices $V^{P}$ and $V^{Q}$ to obtain a new binary matrix $V^{P,Q}=V^{P}\ast V^{Q}$.
It is easy to see that $V^{P,Q}$ satisfies the following properties,

\begin{enumerate}
\item The size of $V^{P,Q}$ is $pqk \times pq.$  
\item $V^{P,Q}$ has $k$ number of row-blocks and each block is of size $pq$
\item There is no overlap between any two distinct columns of $V^{P,Q}.$
\item The density of $V^{P,Q}$ is $\frac{1}{pq}.$
\end{enumerate}

Let $U$ be a $k \times k$ unitary matrix. For each column of $V^{P,Q}$ we replace each
of its $1$-valued entries with a distinct row of $U$ to obtain
a new unitary matrix $\Psi^{P,Q}$ of size $pqk \times pqk$.

The matrix $\Psi$ is constructed by concatenating
$\Psi^{P,Q}$ for $P\in S^{p}$ and
$Q\in S^{q}.$ Let $\alpha = \max_{i,j} |u_{i,j}|$ where
$u_{i,j}$ is the $(i,j)$th element in $U$. The following
properties of $\Psi$ can be easily established.
\begin{enumerate}
\item The size of  $\Psi$ is $pqk \times (pqk \times(pq)^{r}).$ 
\item $\Psi$ is a union of $(pq)^{r}$ number of orthonormal bases. 
\end{enumerate}

We show next that the coherence $\mu_{\Psi}$ of $\Psi$ is at most
$\min(r\alpha^{2},1)$. For the proof of this result, we first give
the concatenation property of the composition rule.
\begin{lemma}
  \label{lem:concat}
  Let $V\ast W$ be the result of composition of the matrices $V$ and $W$
  using the rule given in~\cite{pra_2016}.
  Then $[V_1,V_2]\ast [W_1,W_2] = [V_1\ast W_1,V_1\ast W_2,V_2\ast W_1,
 V_2\ast W_2]$ where $[V,W]$ denotes the column-wise
  concatenation of the two matrices $V$ and $W$.
\end{lemma}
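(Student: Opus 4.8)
The plan is to reduce the identity to the \emph{column-locality} of the composition rule of~\cite{pra_2016}: each column of $V\ast W$ is produced from exactly one column of $V$ together with exactly one column of $W$ (by forming, block by block over the $k$ retained row-blocks, the Kronecker product of the corresponding sub-columns). First I would make this precise by regarding a single column $v$ of $V$ and a single column $w$ of $W$ as one-column matrices carrying the same row-block structure, so that $v\ast w$ is itself a single column, and then observing that $V\ast W$ is exactly the matrix whose columns are $\{v_a\ast w_b\}_{a,b}$ listed in the order fixed by the convention of~\cite{pra_2016}. The crucial point is that $v_a\ast w_b$ depends only on the two columns $v_a$ and $w_b$ and not on the remaining columns of $V$ or of $W$.

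Given this, I would argue as follows. Composition is defined only when the two factors share a common row-block structure, so I first note that $V_1,V_2$ must agree in block count and block size (and likewise $W_1,W_2$); hence $[V_1,V_2]$ and $[W_1,W_2]$ are legitimate inputs to $\ast$ and all four sub-compositions $V_i\ast W_j$ are well defined, each inheriting the block structure guaranteed by Lemma~\ref{lem:comprule}. Since $[V_1,V_2]$ is the matrix whose columns are those of $V_1$ followed by those of $V_2$ (similarly for $[W_1,W_2]$), the columns of $[V_1,V_2]\ast[W_1,W_2]$ are precisely the single columns $c\ast d$ as $c$ ranges over the columns of $[V_1,V_2]$ and $d$ over those of $[W_1,W_2]$. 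Partitioning the pairs $(c,d)$ according to whether $c$ comes from $V_1$ or $V_2$ and whether $d$ comes from $W_1$ or $W_2$ yields four groups, and by column-locality the $(i,j)$-th group consists of exactly the columns of $V_i\ast W_j$.

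It remains to reconcile the orderings, and this is the step I expect to require the most care. I would check that the column-ordering convention of $\ast$ lists the four groups as consecutive blocks in the stated order $V_1\ast W_1,\,V_1\ast W_2,\,V_2\ast W_1,\,V_2\ast W_2$, and that within each group the induced order agrees with the order used by the sub-composition. Under a plain lexicographic indexing of the column pairs this agreement holds after a fixed permutation of columns; since the lemma is invoked only to analyze the coherence $\mu_{\Psi}$, which is invariant under column permutations and under passage to column sub-blocks, this relabelling is harmless and the asserted equality may be read modulo the ordering convention. Carrying out this bookkeeping carefully, and confirming that the row-block structure demanded by Lemma~\ref{lem:comprule} is preserved throughout, is the only non-routine part of the argument.
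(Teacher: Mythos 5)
Your proposal is correct and takes essentially the same route as the paper: the paper's own proof consists of exactly your first observation, namely that the composition rule of \cite{pra_2016} is a column-wise (column-local) operation, combining the support of one column of $V$ with one column of $W$, so it commutes with column-wise concatenation. Your extra care about the column-ordering convention---noting that under lexicographic pairing the four blocks appear only up to a fixed column permutation, which is harmless because the lemma is invoked solely to bound the permutation-invariant coherence $\mu_{\Psi}$---addresses a point the paper silently elides, so it refines rather than departs from the published argument.
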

\begin{proof}
  The composition rule given in~\cite{pra_2016} is a column-wise operation.
  For constructing $V\ast W$ the support of each column of
  $V$ is combined in an appropriate manner with the support of each column of
  $W$. Therefore, the procedure maintains the concatenation property.
\end{proof}

\begin{theorem}
  The coherence $\mu_{\Psi}$ of $\Psi$ is at most
  $\min(r\alpha^{2},1)$.
\end{theorem}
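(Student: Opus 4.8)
The plan is to reduce the coherence bound to a statement about the overlap (column intersection) of the underlying binary matrix, exactly as in the prime power case. First I would use Lemma~\ref{lem:concat} to identify the binary backbone of $\Psi$. Writing $\mathcal{V}^{p} = [V^{P_{1}}, \dots, V^{P_{p^{r}}}]$ and $\mathcal{V}^{q} = [V^{Q_{1}}, \dots, V^{Q_{q^{r}}}]$ for the column-wise concatenations over $S^{p}$ and $S^{q}$, the concatenation property guarantees that the binary matrix obtained by concatenating all $V^{P,Q}$ is exactly $\mathcal{V} = \mathcal{V}^{p} \ast \mathcal{V}^{q}$, and that $\Psi$ is obtained from $\mathcal{V}$ by the same row-of-$U$ substitution applied blockwise.

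Next I would bound the maximum column intersection of $\mathcal{V}$. Two columns of $\mathcal{V}^{p}$ drawn from the same $V^{P}$ are disjoint, while two columns from distinct $V^{P^{1}}, V^{P^{2}}$ overlap precisely at the common roots of the associated difference polynomial, which is nonzero of degree at most $r$ and hence has at most $r$ roots; the same reasoning applies to $\mathcal{V}^{q}$. Thus both backbones have $k$ row blocks (of sizes $p$ and $q$) and maximum column intersection at most $r$, and since $r < k \leq \min\{p,q\}$ the hypotheses of Lemma~\ref{lem:comprule} are met. Applying it shows that any two columns of $\mathcal{V}$ meet in at most $r$ positions.

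Finally I would translate this into the coherence estimate. Each column of $\Psi$ is supported on exactly one entry per row block and, restricted to that support, equals a column of the unitary $U$; hence every column has unit norm and $\mu_{\Psi}$ is simply the largest absolute inner product between distinct columns. For two distinct columns I would distinguish two cases. If they come from the same backbone column of $\mathcal{V}$, they share the same support and, restricted to it, are two distinct columns of $U$, so their inner product vanishes by unitarity. If they come from different backbone columns, the inner product collects one product of $U$-entries for each of the at most $r$ shared support positions, and each such product is bounded in modulus by $\alpha^{2}$. Combined with the trivial bound $\mu_{\Psi}\leq 1$, this yields $\mu_{\Psi} \leq \min(r\alpha^{2}, 1)$.

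The step I expect to be the main obstacle is the clean separation of the two cases in the last paragraph: the intersection bound from Lemma~\ref{lem:comprule} controls only columns arising from different backbone columns, whereas the orthogonality of columns sharing a common backbone column is a separate consequence of the unitarity of $U$ and must be argued independently. Conflating the two would either overlook the full-overlap pairs or appear to contradict the intersection bound, so I would keep these two mechanisms—the overlap counting and the unitarity—carefully distinct.
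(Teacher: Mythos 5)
Your proposal is correct and follows essentially the paper's own route: both arguments reduce the coherence bound to an at-most-$r$ column-intersection bound on the binary backbone, obtained by combining the composition rule of Lemma~\ref{lem:comprule} with the concatenation property of Lemma~\ref{lem:concat}, the only difference being that you compose the full concatenations $\mathcal{V}^{p}\ast\mathcal{V}^{q}$ in one shot whereas the paper applies the same two lemmas pairwise to $[V^{P^{(1)}},V^{P^{(2)}}]\ast[V^{Q^{(1)}},V^{Q^{(2)}}]$. Your closing paragraph additionally makes explicit the translation from the intersection bound to the coherence bound (unitarity of $U$ for columns sharing a backbone column, and $\alpha^{2}$ per overlap position otherwise), a step the paper leaves implicit as in its earlier prime-power theorem.
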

\begin{proof}
Let $P^{(1)},P^{(2)}\in S^{p}$ and  $Q^{(1)},Q^{(2)}\in S^{q},$ such that
$P^{(1)} \neq P^{(2)}$ or $Q^{(1)} \neq Q^{(2)}$. Consider the
composition of the column concatenated matrices, $[V^{P^{(1)}}, V^{P^{(1)}}]$
$[V^{Q^{(1)}},V^{Q^{(2)}}]$. Note that $V^{P^{(1)}}$ and $V^{P^{(2)}}$ have at most
$r$-intersections among any pairs of their columns. Similarly $V^{Q^{(1)}}$ and
$V^{Q^{(2)}}$ also have at most $r$-intersections among their columns.
Therefore, from Lemma \ref{lem:comprule}, the resultant matrix after
composition $[V^{P^{(1)}}, V^{P^{(2)}}] * [V^{Q^{(1)}},V^{Q^{(2)}}]$,
has at most $r$-intersections among its columns. However, from
Lemma~\ref{lem:concat}, we have $[V^{P^{(1)}}, V^{P^{(1)}}] \ast [V^{Q^{(1)}},V^{Q^{(2)}}] =$
$[V^{P^{(1)}}\ast V^{Q^{(1)}},V^{P^{(1)}}\ast V^{Q^{(2)}},V^{P^{(2)}} \ast V^{Q^{(1)}},V^{P^{(2)}} \ast V^{Q^{(2)}}]$. This proves that
$V^{P^{(1)}}\ast V^{Q^{(1)}}$ and $V^{P^{(2)}}\ast V^{Q^{(2)}}$ have at most
$r$-intersections among any pair of their columns.
\end{proof}
Using the above result recursively, we have the following result for
general $m$.
\begin{theorem}
  Let $m=p_{1}\dots p_{t},$ where $p_{1},\dots , p_{t}$ are primes or prime powers
  and $r<k\leq \min \{p_{1},\dots ,p_{t}\}$ and $U$ be a $k\times k$ unitary matrix with largest absolute entry $\alpha.$ Then, there
  exists a CS matrix of size $mk\times (mk\times m^{r})$, which is a
  union of $m^{r}$ number of orthonormal bases, with coherence being at
  most $\min(r\alpha^{2},1)$ and density being $\frac{1}{m}.$  
\end{theorem}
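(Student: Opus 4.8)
The plan is to induct on the number of factors $t$, folding the two-factor composition estimate (the theorem immediately preceding this one) into the general composite case. The object I would track through the induction is the \emph{binary} matrix $W^{(t)}$ obtained by concatenating $V^{P_{1}}\ast\cdots\ast V^{P_{t}}$ over all tuples $(P_{1},\ldots,P_{t})$ with $P_{i}\in S^{p_{i}}$; the inductive claim is that $W^{(t)}$ has $k$ row blocks, each of size $m=p_{1}\cdots p_{t}$, density $1/m$, and pairwise column intersection at most $r$. For $t=1$, where $m=p_{1}$ is a prime or prime power, this is exactly the construction of Section~\ref{sec:consprimepower}: the concatenation of the $V^{P}$ over $P\in S^{p_{1}}$ has $k$ row blocks of size $p_{1}$, density $1/p_{1}$, and pairwise column intersection at most $r$, which serves as the base case.

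For the inductive step, assume $W^{(t-1)}$ satisfies the claim for $m'=p_{1}\cdots p_{t-1}$, so it has $k$ row blocks of size $m'$, density $1/m'$, and pairwise column intersection at most $r$. Let $W_{t}$ be the concatenation of the $V^{P_{t}}$ over $P_{t}\in S^{p_{t}}$; as established in Section~\ref{sec:consprimepower}, $W_{t}$ has $k$ row blocks of size $p_{t}$, density $1/p_{t}$, and pairwise column intersection at most $r$ (the overlap of two distinct columns equals the number of roots of a nonzero polynomial of degree at most $r$). Since $r<k\le\min\{p_{1},\ldots,p_{t}\}$ forces $k\le\min\{m',p_{t}\}$, the hypotheses of Lemma~\ref{lem:comprule} hold for $\Psi_{1}=W^{(t-1)}$ and $\Psi_{2}=W_{t}$, so $W^{(t-1)}\ast W_{t}$ has $k$ row blocks each of size $m'p_{t}=m$, density $1/m$, and pairwise column intersection at most $r$. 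By Lemma~\ref{lem:concat} the composition distributes over both concatenations, so $W^{(t-1)}\ast W_{t}$ is precisely $W^{(t)}$, the concatenation of $V^{P_{1}}\ast\cdots\ast V^{P_{t}}$ over all $t$-tuples. This closes the induction on the intersection bound and the structural parameters.

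Finally I would pass from $W^{(t)}$ to the frame by replacing, in each column, every $1$-valued entry with a distinct row of the unitary $U$ and every $0$ with a zero row, as before. Each block $\Psi^{(P_{1},\ldots,P_{t})}$ is then an $mk\times mk$ unitary matrix, hence one orthonormal basis, and there are $p_{1}^{r}\cdots p_{t}^{r}=m^{r}$ tuples, yielding a union of $m^{r}$ orthonormal bases of size $mk\times(mk\times m^{r})$ with density $1/m$. For the coherence, two columns from the same block are orthogonal because $U$ is unitary; two columns from distinct blocks correspond to distinct compositions whose supports overlap in at most $r$ positions, and each overlapping position contributes a product of two unitary entries of modulus at most $\alpha^{2}$, so the inner product is bounded in absolute value by $r\alpha^{2}$, and trivially by $1$ since the columns are unit vectors. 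Hence $\mu_{\Psi}\le\min(r\alpha^{2},1)$.

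The step I expect to require the most care is verifying that Lemma~\ref{lem:comprule} applies to the \emph{fully concatenated} matrices $W^{(t-1)}$ and $W_{t}$, not merely to individual $V^{P}$ blocks: this rests on confirming that the at-most-$r$ intersection property holds \emph{across} all columns of each concatenation (including columns from different polynomial families) and that the chain of inequalities $r<k\le\min\{k_{1},k_{2}\}\le\min\{n_{1},n_{2}\}$ persists at every level, where the block size $n_{1}=m'$ grows with $t$ while $k$ stays fixed. Once this is secured, Lemma~\ref{lem:concat} performs the bookkeeping that identifies $W^{(t-1)}\ast W_{t}$ with $W^{(t)}$ and lets the two-factor coherence estimate propagate to arbitrary $t$.
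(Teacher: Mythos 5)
Your proposal is correct and takes essentially the same route as the paper: iterated application of the composition rule (Lemma~\ref{lem:comprule}) together with the concatenation property (Lemma~\ref{lem:concat}), followed by embedding the unitary $U$ — your induction on $t$ simply packages more formally what the paper does by applying its two-factor theorem ``recursively.'' The one invariant you should carry explicitly alongside the at-most-$r$ cross-tuple bound is that distinct columns \emph{within} a single composed block $V^{P_{1},\dots,P_{t}}$ have \emph{zero} overlap (Lemma~\ref{lem:comprule} with $r_{1}=r_{2}=0$), since this disjointness of supports — and not only the unitarity of $U$, which handles columns arising from the same column of the binary matrix — is what makes each embedded block $\Psi^{P_{1},\dots,P_{t}}$ an $mk\times mk$ unitary matrix.
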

Proof: For $P_{1}\in S^{p_{1}},$ using the construction procedure described in section IV,  a binary matrix $V^{P_{1}}$ is obtained. Now, applying the composition
rule in \cite{pra_2016} on binary
matrices $V^{P_{i}}$ successively for $i=1, \dots, t$, we obtain a new binary matrix
$V^{P_{1},\dots,P_{t}}=V^{P_{1}}\ast V^{P_{2}} \dots \ast V^{P_{t}}$ of size $mk \times m.$ By construction, every column of $V^{P_{1},\dots,P_{t}}$ contains $k$ number of $1$-valued
entries and there is no intersection between any two columns of
$V^{P_{1},\dots,P_{t}}.$
Let $P^{(1)}_{i}, P^{(2)}_{i}\in S^{p_{i}}$ such that $P^{(1)}_{i} \neq P^{(2)}_{i}$
for at least one $i\in \{1,\dots, t\},$ then by composition rule the intersection
between any column of $V^{P^{(1)}_{1},\dots,P^{(1)}_{t}}$ and any column of
$V^{P^{(2)}_{1},\dots,P^{(2)}_{t}}$ is at most $r.$ This can be seen by iteratively
applying the concatenation Lemma~\ref{lem:concat} to expand
$[V^{P_1^{(1)}},V^{P_1^{(2)}}] \ast [V^{P_2^{(1)}},V^{P_2^{(2)}}] \ast \cdots
\ast [V^{P_t^{(1)}},V^{P_t^{(2)}}]$ into individual composited matrices.
Now, as described previously, embedding a unitary matrix $U_{k\times k}$ into $V^{P_{1},\dots,P_{t}},$ we can construct a unitary matrix
$\Psi^{P_{1},\dots,P_{t}}_{U}$  of size $mk \times mk.$
The matrix $\Psi^{r}_{m,k}$ is constructed by
concatenating $\Psi^{P_{1},\dots,P_{t}}_{U}$ by taking
$P_{i}\in S^{p_{i}}.$ 
Therefore,
$\Psi^{r}_{m,k}$ is a union of $m^{r}$ number of orthonormal bases, with
coherence being at most $\min(r\alpha^{2},1)$ and density being $\frac{1}{m}$.

%

\section{Numerical Simulations}
This section presents the numerical results for demonstrating the
recovery performance of frames constructed via embedding DCT matrix.
The column size of the constructed matrix is $mk*m^{r}$ and the coherence is
at most $\frac{2r}{k}$. For obtaining small coherence, it is necessary to
consider $r \ll k$. Since $k$ is the smallest prime factor in $m$,
for large values of $k$, $m$ is also proportionately large. For example,
if $m = k = 17$ and $r=1$, the column size is in the order of $10^3$, whereas
if $r=2$, the column size is in the order of $10^4$. 
In the results shown here, as an example, it is assumed that $r=1$ and
$m=k\leq 17$ to ease the computational demand.
The comparison is performed with respect to Gaussian random matrices. A total
of $1000$ different signals are considered for each sparsity level and
the reconstruction performance is measured. The reconstruction is
considered good if the SNR (defined below) is greater than $100$dB.
If $x$ is the original signal and $\hat{x}$ is the estimated signal, then
\[SNR = 10\log_{10}\frac{\|x\|}{\|\hat{x}-x\|}.\]

The solutions are computed using the orthogonal matching
pursuit (OMP) algorithm. The stopping criterion is considered to be
the actual sparsity of the signal. Fig. 1 provides comparison
of the success rates of reconstructions between  structured frames and
their Gaussian counterparts. For a given sparsity level, if $90$ percent
of the signals are reconstructed accurately (i.e., their SNR values are above
the
threshold of 100 dB) then we consider that the performance is good for that
sparsity level.  In the above figure, only the performance for sparsity
levels satisfying the aforementioned condition is shown.
It can be seen from this plot that the constructed structured frames show
superior performance compared to Gaussian random matrices.
\begin{figure}
  \label{fig1}   
\centering
\includegraphics[scale=0.19]{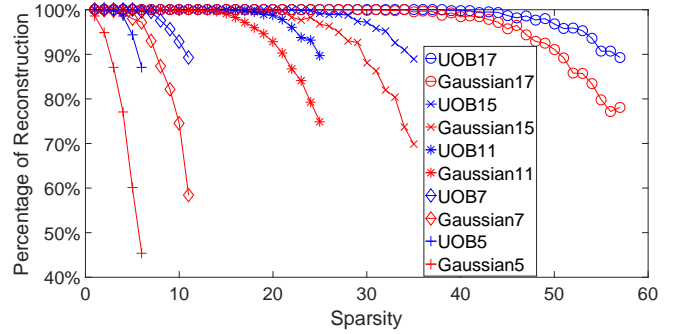}
\caption{Comparison of reconstruction performance of unions of
  orthonormal bases for different $m$ and Gaussian random matrices of the
  same size. UOB stands for Union of Orthonormal Bases. 
  The sizes of the matrices are given by $mk \times (mk*m^r)$
  where $k=m$ and $r=1$. The results are reported for $m=5,7,11,15,17$.
  UOB17, for e.g., represents the results for the matrix with $m=k=17$ and
  $r=1$. The $x$
  and $y$ axes in this plot respectively represent the sparsity (that is, zero
  norm of solution to be recovered) of solution and success rate.
  The performance is only shown for sparsity levels for which at least
  $90\%$ of the signals have been accurately reconstructed.}
\end{figure}

\section{Conclusion}
\label{sec:conclusions}
In the present work, we have constructed union of orthonormal bases for general sizes.  The matrices for
sizes governed by primes and their powers are constructed using polynomials over finite fields. For
constructing frames of general sizes, a recently proposed composition rule has been used. Construction of
mutually unbiased bases has also been given for the prime power cases. Numerical
results show that the constructed structured
frames show superior performance when compared to Gaussian Random matrices
of the same sizes.


\begin{thebibliography}{99}

\bibitem{elad_2002}
M. Elad and A. M. Bruckstein, ``A generalized uncertainty principle and sparse representation in pairs of bases," IEEE Trans. Inform. Th., 48(9):2558-2567, 2002.

\bibitem{feuer_2002}
A. Feuer and A. Nemirovski, ``On sparse representation in pairs of bases," in IEEE Transactions on Information Theory, vol. 49, no. 6, pp. 1579-1581, June 2003.

\bibitem{grib_2002} 
  R. Gribonval and M. Nielsen ``Sparse representations in unions of bases," IEEE Transactions of Information
  Theory, 49(12):3320-3325, 2003

\bibitem{tropp_2004} 
J. A. Tropp, ``Greed is good: algorithmic results for sparse approximation," in IEEE Transactions on Information Theory, vol. 50, no. 10, pp. 2231-2242, Oct. 2004.

\bibitem{cal_1997}
A.R. Calderbank, P.J. Cameron, W.M. Kantor and J.J. Seidel, ``Z4-Kerdock codes, orthogonal spreads, and extremal Euclidean line-sets,"
Proc. London Math. Soc. (3), 75 (2), 436-480, 1997.

\bibitem{stro_2003}
T. Strohmer and R.W. Heath Jr., ``Grassmannian frames with applications to coding and communication," Applied and Computational Harmonic Analysis, Volume 14, Issue 3, Pages 257-275,  May 2003.

\bibitem{sardy_2000}
S. Sardy, A.G. Bruce and P. Tseng, ``Block coordinate relaxation methods for nonparametric signal denoising with wavelet dictionaries,"
Journal of computational and graphical statistics, vol. 9, pp.361–379, 2000.

\bibitem{molla_2004}
S. Molla and B. Torresani, ``Determining local transientness in audio signals,"' IEEE signal processing letters, vol. 11, no. 7, pp. 625–628,
july 2004.

\bibitem{starck_2004}
J.L. Starck, M. Elad, and D.L. Donoho, ``Image decomposition via the combination of sparse representations and a variational approach,"'
IEEE transactions on image processing, february 2004.

\bibitem{bourgain_2011}
   J. Bourgain, S. Dilworth, K. Ford, S. Konyagin and D. Kutzarova, ``Explicit constructions of RIP matrices and related problems," Duke Math. J. 159, 145-185, 2011.


\bibitem{can_2008}
  E. Candes, ``The restricted isometry property and its implications for compressed sensing," Comptes Rendus Mathematique, Vol. 346, pp. 589-592, 2008.
 





\bibitem{klap_2003}
A. Klappenecker and M. R¨otteler, ``Constructions of mutually unbiased bases," Finite Fields and Applications: 7th International Conference, 2003. 

\bibitem{san_2004}
M. Saniga, M. Planat, and H. Rosu, ``Letter to the editor: Mutually
unbiased bases and finite projective planes," Journal of Optics B: Quantum and Semiclassical Optics, 6:L19–L20, 2004. arXiv:arXiv:math-ph/0403057.
 
\bibitem{sey_2014}
 U Seyfarth, L L Sánchez-Soto and G Leuchs, ``Structure of the sets of mutually unbiased bases with cyclic symmetry," Journal of Physics A: Mathematical and Theoretical, Volume 47, Number 45, 2014. 

\bibitem{sey_2011}
U. Seyfarth and K. S. Ranade, ``Construction of mutually unbiased bases with cyclic symmetry for qubit systems," Phys. Rev. A 84, 042327, 2011.

\bibitem{woc_2004}
P. Wocjan, T. Beth, ``New Construction of Mutually Unbiased Bases in Square Dimensions," 2004. https://arxiv.org/abs/quant-ph/0407081.

\bibitem{chris_2003}
 O. Christensen, ``An Introduction to Frames and Riesz Bases," Boston, MA, USA: Birkhauser, 2003.



\bibitem{elad_2010}
Elad. M, ``Sparse and Redundant Representations; from theory to applications in signal and image processing," Springer, Berlin, 2010.

\bibitem{pra_2016}
P. Sasmal, R. R. Naidu, C. S. Sastry and P. Jampana, ``Composition of Binary Compressed Sensing Matrices," in IEEE Signal Processing Letters, vol. 23, no. 8, pp. 1096-1100, Aug. 2016.
  
\bibitem{Gita_2013}
J. Cahill, P. G. Casazza and G. Kutyniok, ``Operators and frames," Journal of Operator Theory, Volume 70, Issue 1, Summer 2013  pp. 145-164.	


\end{thebibliography}
\end{document}